\newtheorem{theorem}{Theorem}[section]
\newtheorem{lemma}[theorem]{Lemma}
\newtheorem{corollary}[theorem]{Corollary}
\theoremstyle{definition}
\newtheorem{definition}[theorem]{Definition}
\theoremstyle{remark}
\newtheorem{remark}[theorem]{Remark}
\numberwithin{equation}{section}
\begin{document}

\title[]{more odd graph theory from another point of view}%
\author{S. Morteza  MIRAFZAL }%
\address{Department of Mathematics, Lorestan University, Khoramabad, Iran}%
\email{smortezamirafzal@yahoo.com}
\email{mirafzal.m@lu.ac.ir}%
\address{ }%
\email{}%
%\thanks{}%
%\subjclass{}%
%\keywords{}%

%\date{}%
%\dedicatory{}%
%\commby{}%

% ----------------------------------------------------------------
\begin{abstract} The Kneser graph $K(n, k)$ has as vertices all $k$-element subsets of $[n]=\{1,2,...,n \}$
and an edge between any two vertices  that are disjoint. If
$n=2k+1$, then  $K(n, k)$ is called an odd graph. Let $ n >4$ and $1< k < \frac{n}{2} $.
In the present  paper,
we show that if the Kneser graph $K(n,k)$  is   of even order where  $n$ is an  odd integer or both of the integers $n,k $
 are even,
then $K(n,k)$   is a vertex-transitive non Cayley graph.
Although,   these are  special cases of   Godsil [8], unlike his proof that
 uses some very  deep group-theoretical facts,  ours uses no heavy group-theoretic facts.
We   obtain our results by using  some rather
 elementary facts of
 number theory and group theory.
We show that `almost all' odd graphs are of even order, and consequently are
 vertex-transitive non Cayley graphs.
 Finally, we show that if $ k > 4 $ is an
  even integer  such that  $k$ is not of the form $ k = 2^t  $  for some $ t> 2 $,  then
the line graph of the odd graph $O_{k+1}$ is a vertex-transitive non Cayley graph.

 \

 Keywords : Kneser graph, Odd graph, Permutation  group,  Cayley graph, Line graph
  \

  AMS subject classifications. 05C25, 05C69, 94C15
\end{abstract}  \

\maketitle
% ----------------------------------------------------------------
\section{Introduction and Preliminaries} In this paper, a graph
$\Gamma=(V,E)$ is considered as a finite, undirected, connected graph, without loops or multiple edges,  where $V=V(\Gamma)$ is the
vertex-set and $E=E(\Gamma)$ is the edge-set. For every terminology and
notation not defined here, we follow [2, 6, 7, 11].

The study of vertex-transitive graphs has a long and rich history in discrete
mathematics. Prominent examples of vertex-transitive graphs are Cayley graphs
which are important in both theory as well as applications. Vertex-transitive graphs
that are not Cayley graphs, for which we use the abbreviation VTNCG,
have been an object of a systematic study since  1980 [3,8].
In trying to recognize
whether or not a vertex-transitive graph is a Cayley graph, we are left with
the problem of determining whether the automorphism group contains a regular
subgroup [2]. The reference   [1] is an  excellent  source for studying graphs that are VTNCG.\

Let $ n > 4$   be an   integer and $1 < k < \frac{n}{2}$.
The Kneser graph $ K(n,k) $ is  the graph
with the $ k $-element subsets  of $ [n] = \{ 1,2,...,n \} $ as  vertices, where
 two such vertices are adjacent if and only if they are disjoint. If $n=2k+1$, then the
 graph $ K(2k+1,k)$ is called an odd graph and is denoted by $O_{k+1}$.
  There are several good
 reasons for studying these graphs. One is that the questions
which arise are related to problems in other areas of combinatorics, such as
combinatorial set theory, coding theory, and design theory. A second reason is that
the study of odd graphs tends to highlight the strengths and weaknesses of the
techniques currently available in graph theory, and that many interesting problems
and conjectures are encountered  [3,4].  \

Amongst the various interesting properties
of the Kneser graph $ K(n,k)$,   we interested in the automorphism group of it and we
want to see how it acts on its vertex set. If $ \theta \in Sym ([n]) $, \
 then \

\

\centerline{$ f_\theta : V( K(n,k) )\longrightarrow V( K(n,k)) $,
 $ f_\theta (\{x_1, ..., x_k \}) = \{ \theta (x_1), ..., \theta (x_k) \}$} \

 is an automorphism of $ K(n,k) $ and the mapping
  $ \psi : Sym ([n]) \longrightarrow Aut ( K(n,k) )$, defined by
 the rule $ \psi ( \theta ) = f_\theta $ is an injection.
  In fact,  $ Aut ( K(n,k)) = \{ f_\theta  | \theta  \in Sym([n]) \} \cong Sym( [n] )$  [7],
   and for this reason we
 identify $  f_\theta $ with $  \theta$ when  $  f_\theta $
is an automorphism of $ K(n,k) $, and in such a situation we
 write $ \theta $ instead of $  f_\theta $. It is an easy task to
  show that the Kneser graph $ K(n,k)$ is a vertex-transitive graph [7].\

In 1979 Biggs  [3]   asked whether there are many values of $k$
for which the odd graphs $ O_{k+1}$
are Cayley graphs. In 1980 Godsil [8]  proved (by using  some
very deep group-theoretical facts of group theory [9,10]) 
    that for 'almost all' values of $k$, the Kneser graph $ K(n,k)$ is a
$VTNCG$. In the present  paper,
we show that if the Kneser graph $K(n,k)$  is   of even order where  $n$ is an  odd integer or both of the integers $n,k $
 are even,
then $K(n,k)$ is a $VTNCG$.
We call the odd graph $ O_{k+1}$ an even-odd graph when its order is an even integer.
 We show that `almost all' odd graphs are even-odd graphs, and consequently
 'almost all' odd graphs are $VTNCG$. We obtain our results,  by using some rather
elementary  facts of number theory and group theory. \

Finally, we show that if $ k > 4 $ is an
  even integer  and $k$ is not of the form $ k = 2^t  $  for some $ t > 2$,  then
the line graph of the odd graph $O_{k+1}$ is a $VTNCG$.

\section{Main results}

\begin{theorem}
Let $n,k$ are integers, $n>4$, $2 \leq k < \frac{n}{2}$ and  ${n}\choose{k}$ is an even
integer.  Then the Kneser graph $K(n,k)$  is a vertex transitive non Cayley graph
  If one of the following conditions holds: \

 $(I)$ \   \ \  $n$ is an odd integer; \

 $(II)$  \ \  $n$ and $k$ are even integers.
\end{theorem}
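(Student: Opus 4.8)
To show that $K(n,k)$ is a VTNCG under the stated hypotheses, it suffices (by vertex-transitivity, already known from [7]) to show that $\mathrm{Aut}(K(n,k))\cong \mathrm{Sym}([n])$ contains no regular subgroup acting on the vertex set $V=\binom{[n]}{k}$. A regular subgroup $R\le S_n$ would have order $|R|=|V|=\binom{n}{k}$, so the whole argument reduces to a counting/divisibility obstruction: I would look for a contradiction between the \emph{arithmetic} of $\binom{n}{k}$ and the \emph{group-theoretic} constraints on a subgroup of $S_n$ of that order that acts transitively (hence regularly) on $k$-subsets.

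The key structural step I expect to use is that a transitive subgroup $R$ of $S_n$ acting on $[n]$ — which it must, since a group acting transitively on $k$-subsets of $[n]$ (with $1<k<n/2$) is itself transitive on $[n]$ — has order divisible by $n$. Combined with $|R|=\binom{n}{k}$ this already forces $n \mid \binom{n}{k}$, which is automatic, so I need something sharper. The real leverage should come from a Sylow / prime-power analysis: pick a suitable prime $p$ (for case $(I)$, where $n$ is odd and $\binom{n}{k}$ is even, take $p=2$), and compare the $2$-part of $\binom{n}{k}$ with the $2$-part forced by $R$ being transitive on an odd-size set. Since $n$ is odd, a Sylow $2$-subgroup of $R$ fixes a point of $[n]$; I would then iterate this down the subset-stabilizer chain, or use the orbit-counting identity on the action of a $2$-element on $V$, to pin down $|R|_2$ and derive a contradiction with the hypothesis that $\binom{n}{k}$ is even (equivalently, via Kummer's theorem, that adding $k$ and $n-k$ in base $2$ involves a carry). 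For case $(II)$, with $n,k$ both even, the parity argument must be replaced: here I would exploit that $n-k$ is also even and look at the fixed-point structure of an involution, or switch to a different prime divisor of $\binom{n}{k}$; the point is to again contradict the divisibility constraints a subgroup of order $\binom{n}{k}$ in $S_n$ must satisfy.

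Concretely I would proceed as follows. (1) Reduce to: no regular subgroup $R\le S_n$ of order $\binom{n}{k}$ exists; note $R$ is transitive on $[n]$. (2) Establish the basic lemma that if $R\le S_n$ is transitive and $|R|$ has a prime factor $p$ with $p\nmid n$... no — rather: study a Sylow $p$-subgroup $P$ of $R$ and its orbits on $[n]$ and on $V$, using that $P$ acts semiregularly (freely) on $V$ because $R$ does. So $|P| = |R|_p$ divides $\binom{n}{k}$, and $P$'s orbits on $V$ all have size $|P|$. (3) Choose $p=2$ in case $(I)$: since $n$ is odd, $P$ has a fixed point $a\in[n]$; then $P$ permutes the $k$-subsets containing $a$ among themselves and likewise those not containing $a$, so $|P|$ divides both $\binom{n-1}{k-1}$ and $\binom{n-1}{k}$; iterate to squeeze $|P|$ and contradict the $2$-adic valuation of $\binom{n}{k}$. (4) In case $(II)$, run the analogous argument with the roles of "contains $a$" adjusted using $n,k$ even, or pass to an odd prime divisor of $\binom nk$ and a companion numerical identity.

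**Main obstacle.** The hard part will be case $(II)$ (and, more generally, making the divisibility squeeze tight enough to always reach a contradiction): parity alone is the clean tool in case $(I)$, but when $n$ and $k$ are both even I do not get a free fixed point for a Sylow $2$-subgroup, so I expect to need a more delicate choice of prime or a finer analysis of the action on $V$ — possibly invoking an elementary number-theoretic fact about $\binom{n}{k}$ (Kummer/Lucas-type carry conditions) to guarantee the existence of a prime $p$ and a fixed point configuration that the transitive group $R$ cannot accommodate. Getting this case to go through with only "elementary facts of number theory and group theory," as the paper promises, is where the real work lies.
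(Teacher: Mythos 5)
Your opening reduction is the same as the paper's: assume a regular subgroup $R\le \mathrm{Sym}([n])$ of order $\binom{n}{k}$ exists, note that $2$ divides $|R|$, and look for a fixed vertex of a $2$-element to contradict semiregularity. But from there your proposal diverges into a Sylow/divisibility scheme that is never actually carried to a contradiction, and this is a genuine gap, not just a stylistic difference. In step (3) you deduce that a Sylow $2$-subgroup $P$ fixes a point $a\in[n]$ and hence that $|P|$ divides both $\binom{n-1}{k-1}$ and $\binom{n-1}{k}$; but these two divisibility statements need not conflict with the $2$-adic valuation of $\binom{n}{k}$ (already for the Petersen graph, $|P|=2$ divides both $\binom{4}{1}=4$ and $\binom{4}{2}=6$, so no contradiction appears), and the proposed ``iteration'' stalls because after removing the fixed point the remaining set has even size and no further free fixed point is guaranteed. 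For case (II) you explicitly concede you have no argument and speculate about switching primes; that is where the theorem's content lies, so the proof is incomplete exactly where it matters.

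The missing idea is much more elementary and handles both cases at once. By Cauchy's theorem take an involution $\theta\in R$ and write it as a product of $a$ disjoint transpositions $(x_1y_1),\dots,(x_ay_a)$ and $b$ fixed points $i_1,\dots,i_b$, so $2a+b=n$. A $k$-subset is fixed by $\theta$ as soon as it is a union of whole transposition pairs and fixed points, i.e.\ as soon as one can write $k=2c+d$ with $0\le c\le a$ and $0\le d\le b$. In case (I), $n$ odd forces $b\ge 1$, and one checks directly that such $c,d$ always exist (take $c=\min(a,\lfloor k/2\rfloor)$ adjusted by one fixed point when $k$ is odd; the needed number of fixed points is at most $k-2a\le n-2a=b$). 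In case (II), $k=2e$ even: if $a\ge e$ take $e$ whole pairs, otherwise take all $a$ pairs plus $k-2a\le b$ fixed points. Either way $\theta$ fixes a vertex of $K(n,k)$, contradicting regularity of $R$. No Sylow analysis, orbit squeezing, or Kummer-type carry argument is needed; the hypotheses (I) and (II) enter only through this parity bookkeeping on the cycle type of $\theta$.
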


\begin{proof}
 We know that  the Kneser graph $ K(n,k) $ is a vertex-transitive graph
 (for every positive integer $n$) [7],
hence it is sufficient to show that it is   a  non Cayley graph.\

On the contrary, we  assume that the Kneser graph $ K(n,k) $ is a Cayley graph.
Then the group $ Aut ( K(n,k)  ) = Sym( [n] ) $, $ [n] = \{ 1, 2, ..., n \} $ has a
 subgroup $ R $, such that $ R $ acts regularly on the vertex-set of $ K(n,k) $.
In particular,  the order of $ R $ is $  { n \choose k } $, and since ( by assumption )  this
number is an even integer, then $ 2 $ divides $| R| $.  Therefore,   by the Cauchy's
theorem the group $ R $ has an element $ \theta $  of order $ 2 $.
We know that each element of $ Sym([n])$ has a unique factorization into disjoint
cycles of $ Sym([n]) $, hence
we can write $ \theta = \rho_1 \rho_2 ... \rho_{h} $, where
each $ \rho_i $ is  a cycle  of $ Sym ( [n] ) $ and
 $ \rho_i \cap \rho_j ={\varnothing} $
when $ i \neq j $. We also know that if $ \theta = \rho_1 \rho_2 ... \rho_{h} $, where
each $ \rho_i $ is  a cycle of $ Sym ( [n] ) $ and
 $ \rho_i \cap \rho_j ={\varnothing} $, then the order
 of the permutation $ \theta $ is the least common multiple
 of the integers,  $ | \rho_1 |, \  |  \rho_2 |,
   ... , | \rho_{h  |} $.     Since $ \theta $ is of order $ 2 $, then the order
of each $ \rho_i $ is $ 2 $ or $ 1 $, say,  $ | \rho_i | \in \{1, 2 \} $.
In   other words,  each $ \rho_i $ is a transposition or a cycle of length $ 1 $. \
Let $ \theta = \tau_1 \tau_2 ... \tau_a ( i_1) (i_2) ... (i_b) $,  where
each $ \tau_r $ is a transposition and each $ i_s \in [n] $. We now argue the  cases (I) and (II). \

(I) \ Let $n=2m+1,\  m >1$.
Therefore, we have $ 2a+b =n=2m + 1 $, where $b$ is an odd integer, and hence it is non-zero.
Since $ b  $ is a positive odd integer, then $ b-1$ is an
 even integer. we let $ d= \frac { b-1} {2}$, so that $ d $ is a non-negative
 integer,  $ d < b$ and $ m= a+d $. Let $ \tau_r = (x_r y_r )$, $ 1 \leq r \leq a$,  where $ x_r, y_r \in [n] $.
 Now,  there are two cases: \

  (i) \  \    $ 2a \leq k$, \  \ (ii) \ \ $ 2a > k$. \

 (i) \ \ Suppose   $ 2a \leq k$. Then there is some integer $t$
 such that $ 2a+ t =k$, and since $ 2a+ b =2m+1 $, then $ t \leq b $.
   Thus,   for transpositions
 $ \tau_1,  \tau_2,  ...,  \tau_{a}$ and cycles $ ( i_1),...,( i_t )$ of
 the cycle factorization
 of $ \theta $, the set  $ v= \{ x_1, y_1,..., x_a, y_a,  i_1,  i_2,  ...,  i_t    \} $
  is a $k $ subset of the set $ [n]$,  and thus it is a vertex
 of the Kneser graph $ K(n,k) $. Therefore,   we have;  \

 $$ \theta( v ) = \{ \theta ( x_1), \theta ( y_1 ), ..., \theta ( x_{a}), \theta (y_{a}), \theta ( i_1 ), ..., \theta ( i_t )\}= $$
 $$ \{ y_1, x_1, ..., y_{a}, x_{a}, i_1,  i_2,  ...,  i_t \} = v$$\

  (ii) \ \ \ Suppose $ 2a > k$. Then there is some integer $c $ such that $ 2c \leq k$ and $ 2(c+1 ) > k$.  If $2c=k$, then we take the vertex $ v= \{ x_1, y_1,..., x_c, y_c\} $, and hence we have;  \

\

 \centerline{$ \theta( v ) = \{ \theta ( x_1), \theta ( y_1 ), ..., \theta ( x_c), \theta (y_c)\} =
  \{ y_1, x_1, ..., y_c, x_c \} = v $}\

  We now assume $2c< k$, then
   $ 2c+1 = k$.  Since $ b \geq 1$, then for transpositions
 $ \tau_1,  \tau_2,  ...,  \tau_{c}$ and cycle $ ( i_1)$ of the cycle factorization
 of $ \theta $, the set  $ v= \{ x_1, y_1,..., x_c, y_c,  i_1\} $ is a $k $-subset of the set $ [n] $,   and therefore it is a vertex
 of the Kneser graph $ K(n,k) $. Thus,   we have;   \newline

 \centerline{$ \theta( v ) = \{ \theta {( x_1)}, \theta( y_1 ), ..., \theta (x_c), \theta (y_c), \theta ( i_1 ) \} =
  \{ y_1, x_1, ..., y_{c}, x_{c}, i_1  \} = v $}\

 (II)
Let $n= 2m, \  m>2$ and $k=2e, \  0< 2e <m$.    Since $ \theta = \tau_1 \tau_2 ... \tau_a ( i_1) (i_2) ... (i_b) $,   \ $ \tau_r = (x_r y_r )$, $ 1 \leq r \leq a$,  where $ x_r, y_r \in [n] $, then  we have $ 2a+b =n=2m  $, where $b$ is an even integer.  We now consider the following  cases. \

If $ a<e $, then $2a<2e$, and hence there is some integer $t$ such that  $2a+t=2e=k$. Since $2a+b=n>2k=4e$, then $t<b$.  Therefore, $v=\{ x_1, y_1,...,x_a,y_a,i_1,...,i_t \}$ is a vertex of $K(n,k)$. \

If $a \geq e$, then   $v=\{ x_1, y_1,...,x_e,y_e \}$ is a vertex of $K(n,k)$. \

On the other hand, we  can see that in every case we have $\theta(v)=v$.\

From the above argument, it follows that $ \theta $ fixes a vertex of the Kneser graph $ K(n,k) $, which is a contradiction, because $R$ acts regularly on the vertex-set of $ K(n,k) $ and $  \theta \in R$ is of order 2.
This contradiction
  shows that the assertion of our theorem is true.

\end{proof}

\begin{remark}
Since $1 < k < \frac{n}{2}$, hence
 if in the case (I) of the  above theorem, we add the condition, \

\centerline{ 'and assume that  $k$  is  an even integer' } \

  then
we can  construct two vertices $v,w$ of $K(n,k)$ such that
 $\theta( v )=v$, $\theta( w )=w$ and $ v\cap w =\varnothing $. In fact,
if $k=2l,\  l>0$, and $2a \leq k=2l$, we have no problem for constructing the vertices $v$ and $w$. If $2a> k=2l$, then
we can construct,
by transpositions, $\tau_1,...,\tau_l $, the vertex $v$, and  hence  we now  have enough transpositions
and fixed points of  $\theta$, namely $\tau_{l+1},..., \tau_{a} $ and  $i_1,...,i_b$, for constructing the vertex $w$. \

In particular, if
 the integers $n,k$ are even, then there are vertices $v,w$ of $K(n,k)$ such that
 $\theta( v )=v$, $\theta( w )=w$ and $ v\cap w =\varnothing $.
\end{remark} \

Since $ {5} \choose {2}$=10 is an even integer and 5 is an odd integer,
 then we can conclude   from  the above theorem  that $ K(5,2)$, namely,   the Petersen graph is
a vertex transitive non-Cayley graph. \

Let $ k$   be a positive integer.
We know from the definition of the odd  graph $O_{k +1} $ that   its order
is $ { 2k+1}  \choose { k } $. \

\begin{definition} We define the odd graph $ O_{k +1} $  as an even-odd graph, when it has
an even order,  namely, if the number  $ { 2k+1}  \choose { k } $ is an even integer.
\end{definition} \

  For   example,
we know that the Petersen graph has $ { 5 } \choose { 2 } $ = 10 vertices,  so    by our
definition  it is an even-odd graph.  In the first step,  we want to show that  `almost all' odd graphs  are even-odd graphs. In fact,
we will show that if $ k > 1 $ and $k$ is not of the form $ k = 2^t -1 $  for some $ t \geq 2$,  then
the number  $ { 2k+1}  \choose { k } $ is an even integer. For our  purpose, we need
the following fact which was discovered by Lucas [5,  chapter   3]. We assume here the usual conventions for binomial coefficients, in particular,  $ {a}\choose {b}$=0  if $ a < b$. \

\begin{theorem}
   Let $ p $ be a prime number  and let $ m= a_0 + a_1 p + ... +a_k p^k$, $ n= b_0 + b_1 p + ... +b_k p^k$,
where $ 0\leq a_i,  b_i < p  $  for $ i= 0,..., k $. Then we have;  \
  \end{theorem}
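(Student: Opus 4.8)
The plan is to prove the congruence
$$\binom{m}{n} \equiv \prod_{i=0}^{k} \binom{a_i}{b_i} \pmod{p}$$
by passing to the polynomial ring $\mathbb{F}_p[x]$ and comparing the coefficients of $x^n$ on the two sides of a suitable identity. The whole argument rests on one elementary observation together with the uniqueness of base-$p$ digit expansions.

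First I would record the classical congruence $(1+x)^p \equiv 1 + x^p \pmod{p}$. This is immediate from the fact that for $0 < j < p$ the integer $\binom{p}{j} = \dfrac{p!}{j!\,(p-j)!}$ is divisible by $p$: the prime $p$ divides the numerator $p!$ but, since both $j$ and $p-j$ are strictly less than $p$, it divides neither $j!$ nor $(p-j)!$, so it survives in $\binom{p}{j}$. Iterating this gives $(1+x)^{p^i} \equiv 1 + x^{p^i} \pmod{p}$ for every $i \geq 0$. Next, using the base-$p$ expansion $m = \sum_{i=0}^{k} a_i p^i$ I would write
$$(1+x)^m = \prod_{i=0}^{k}\bigl((1+x)^{p^i}\bigr)^{a_i} \equiv \prod_{i=0}^{k}\bigl(1 + x^{p^i}\bigr)^{a_i} \pmod{p}.$$
The coefficient of $x^n$ on the left is $\binom{m}{n}$. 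On the right, expanding each factor by the ordinary binomial theorem,
$$\prod_{i=0}^{k}\bigl(1 + x^{p^i}\bigr)^{a_i} = \prod_{i=0}^{k}\ \sum_{c_i=0}^{a_i} \binom{a_i}{c_i} x^{c_i p^i},$$
so a monomial $x^{N}$ in this product arises precisely from a tuple $(c_0,\ldots,c_k)$ with $0 \le c_i \le a_i < p$ and $N = \sum_{i=0}^{k} c_i p^i$. Since every $c_i$ lies in $\{0,\ldots,p-1\}$, such a tuple is exactly the base-$p$ digit string of $N$; hence for $N = n$ the only possibility is $c_i = b_i$ for all $i$, and the coefficient of $x^n$ on the right equals $\prod_{i=0}^{k}\binom{a_i}{b_i}$. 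Equating coefficients modulo $p$ gives the theorem, the convention $\binom{a}{b}=0$ for $a<b$ taking care of the cases where some $b_i > a_i$ (both sides are then $0$).

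The main obstacle here is purely organizational rather than conceptual: one has to verify carefully that the map sending a tuple $(c_0,\ldots,c_k)$ with $0\le c_i<p$ to the integer $\sum c_i p^i$ is injective, so that the coefficient of $x^n$ in the expanded product really is a single product of binomials and not a sum of several. This is exactly the point at which the hypotheses $0\le a_i,b_i<p$ and the primality of $p$ (through $(1+x)^p\equiv 1+x^p$) are both genuinely used; once uniqueness of base-$p$ representations is invoked, the coefficient comparison is immediate and the proof is complete.
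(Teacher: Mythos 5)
Your proof is correct and complete. The paper does not actually prove this statement (Lucas' theorem); it simply cites it as a known fact from the literature ([5, Chapter 3]) and uses the congruence $\binom{m}{n} \equiv \prod_{i=0}^{k}\binom{a_i}{b_i} \pmod{p}$ in Lemma 2.5, so there is no proof in the paper to compare yours against. Your argument is the standard generating-function proof: the congruence $(1+x)^p \equiv 1+x^p \pmod{p}$, its iteration to $(1+x)^{p^i} \equiv 1+x^{p^i}$, expansion of $\prod_i (1+x^{p^i})^{a_i}$, and the uniqueness of base-$p$ digit strings to isolate the coefficient of $x^n$ — including the correct handling of the degenerate case $b_i > a_i$ via the convention $\binom{a_i}{b_i}=0$, which is exactly the convention the paper adopts. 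All steps are sound and the hypotheses are used where you say they are.
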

 \centerline
{ ${m } \choose {n}$  $ \equiv \prod _{ i=0} ^{ k}$ ${a_i} \choose { b_i }$   \ \ (  mod \  $ p$ ) }
   \
\

\

We now are ready to  prove the following assertion.
\begin{lemma}

If $ k > 1 $ and $k$ is not of the form $ k = 2^t -1 $  for some $ t \geq 2$,  then
the number  $ { 2k+1}  \choose { k } $ is an even integer.
\end{lemma}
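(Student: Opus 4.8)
The plan is to invoke the theorem of Lucas stated just above, specialized to the prime $p=2$. Write the binary expansion $k=\sum_{i\ge 0} b_i 2^i$ with each $b_i\in\{0,1\}$, and similarly $2k+1=\sum_{i\ge 0} a_i 2^i$. By Lucas's theorem, $\binom{2k+1}{k}\equiv\prod_{i\ge 0}\binom{a_i}{b_i}\pmod 2$, and since $\binom{a_i}{b_i}$ is $1$ when $b_i\le a_i$ and $0$ when $b_i>a_i$, we get that $\binom{2k+1}{k}$ is \emph{odd} if and only if $b_i\le a_i$ for every $i$; equivalently, every binary digit of $k$ that equals $1$ must also equal $1$ in $2k+1$. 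So it suffices to show this digit-containment fails whenever $k>1$ is not of the form $2^t-1$.

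The key step is to read off the digits $a_i$ of $2k+1$ from those of $k$. Multiplying by $2$ shifts the digits of $k$ up by one place and puts a $0$ in the units position; adding $1$ then fills that units position, and since it was $0$ no carry propagates. Hence $a_0=1$ and $a_i=b_{i-1}$ for all $i\ge 1$ (and if $k$ had a top digit $b_s=1$, the resulting top digit $a_{s+1}=1$ of $2k+1$ is harmless, as the corresponding digit $b_{s+1}$ of $k$ is $0$). Substituting into the criterion above: the inequality $b_0\le a_0$ holds automatically since $a_0=1$, while for $i\ge 1$ the inequality $b_i\le a_i$ becomes $b_i\le b_{i-1}$, i.e.\ $b_i=1\Rightarrow b_{i-1}=1$.

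Consequently $\binom{2k+1}{k}$ is odd exactly when the digit sequence $b_0,b_1,b_2,\dots$ is nonincreasing; for a nonzero integer this means the $1$'s of $k$ occupy exactly the initial block of positions $0,1,\dots,t-1$ with all higher digits zero, that is, $k=2^t-1$ for some $t\ge 1$. Since we assume $k>1$, the value $t=1$ (giving $k=1$) is excluded, so $\binom{2k+1}{k}$ odd forces $k=2^t-1$ with $t\ge 2$. Taking the contrapositive yields the Lemma. I do not anticipate a real obstacle here; the only points requiring a moment's care are the absence of a carry in passing from $2k$ to $2k+1$ (so that the digits of $2k+1$ are genuinely the shifted digits of $k$ with a $1$ appended in the units place) and the harmlessness of the possible extra leading digit.
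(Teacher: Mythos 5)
Your proposal is correct and follows essentially the same route as the paper: both apply Lucas's theorem at the prime $2$ and exploit the fact that the binary digits of $2k+1$ are those of $k$ shifted up one place with a $1$ in the units position, so that $\binom{2k+1}{k}$ is odd precisely when the digit sequence of $k$ is nonincreasing, i.e.\ $k=2^t-1$. The only cosmetic difference is that you package this as a full ``if and only if'' (which also yields the converse the paper records separately in Remark 2.6), whereas the paper argues by directly exhibiting an index $j$ at which the Lucas factor $\binom{a_j}{a_{j+1}}=\binom{0}{1}$ vanishes.
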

\begin{proof}
Let $ k= a_0 + a_1 2 + ... +a_n 2^n$, where $ 0\leq a_i  < 2  $ for
 $ i= 0,..., n $ with $ a_n \neq 0 $.
If for every $i$, $ a_i = 1$, then we have $ k = 1+ 2+ 2^2 +...+ 2^n = 2^{ n+1} -1 $,
 which is impossible.
Therefore,  we must have $ a_r =0 $, for some $ r= 0,..., n-1 $. If $ j$ is the largest
index such that $ a_j =0 $, then for all $i$ such that $ i > j$ we have $ a_i =1$.
It is obvious that $ j \neq n$. If we let
$ 2k+1 =b_0 + b_1 2 + ... +b_n 2^n + b_{n+1 }2^{ n+1} $, then  we must
 have $ b_{ i+1} = a_i $ for $ i= 0,..., n $.  Therefore,
 $ {b_{ j+1}} \choose {a_{ j+1}}$ = ${a_{ j}} \choose {a_{ j+1}} $ = $ { 0 } \choose { 1 }$ =0.
 By Theorem 2.4.  we
  have \\

  \centerline{ ${2k+1 } \choose {k}$ $ \equiv \prod _{ i=0} ^{ n +1}$${b_i} \choose {a_i }$ \ \ (  mod \  $ 2\
 )$}

\

 where $ a_{n+1 } =0$.  Now
since $ {b_{ j+1}} \choose {a_{ j+1}}$ = 0, then \\

\centerline{${2k+1 } \choose {k}$ $ \equiv \prod _{ r=0} ^{ n +1}$${b_i} \choose { a_i }$ =0 (  mod \   $ 2\  )$}

\

Thus,  for such a $k$
the integer ${2k+1 } \choose {k}$ is an even integer.

\end{proof} \

\begin{remark}
 It follows from the proof of the above lemma that, if $ k = 2^t -1 $, for some positive integer $t$,
 then the integer ${2k+1 } \choose {k}$ is an odd integer.
 \end{remark}
\

The following result follows from Lemma 2.5.
\begin{corollary}
`Almost all' odd graphs are even-odd graphs, in   other words,
if   $ k > 1 $ and $k$ is not of the form $ k = 2^t -1 $  for some $ t \geq 2$,  then
the odd graph $ O_{k+1} $ is an even-odd graph.

\end{corollary}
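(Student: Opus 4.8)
The plan is to derive this immediately from Lemma 2.5 together with Definition 2.3, and then to make precise what the phrase ``almost all'' should mean. First I would recall that, by the very definition of the odd graph, the order of $O_{k+1}$ equals $\binom{2k+1}{k}$. Hence, by Definition 2.3, the graph $O_{k+1}$ is an even-odd graph if and only if $\binom{2k+1}{k}$ is an even integer. Now suppose $k>1$ and $k$ is not of the form $k=2^t-1$ for some $t\geq 2$. Then Lemma 2.5 asserts precisely that $\binom{2k+1}{k}$ is even, and therefore $O_{k+1}$ is an even-odd graph. This establishes the ``in other words'' clause of the corollary verbatim.

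It remains to justify the quantifier ``almost all''. I would interpret it in the standard density sense: the set of exceptional values $\mathcal{E}=\{1\}\cup\{\,k\in\mathbb{N}:k=2^t-1\text{ for some }t\geq 2\,\}$ has natural density zero among the positive integers. For a given bound $N$, the number of integers $t\geq 2$ with $2^t-1\leq N$ is at most $\log_2(N+1)$, so $|\mathcal{E}\cap[1,N]|\leq \log_2(N+1)+1$. Dividing by $N$ and letting $N\to\infty$ shows that the density of $\mathcal{E}$ is $0$; equivalently, the set of $k$ for which $O_{k+1}$ is an even-odd graph has density $1$. Combining this with the previous paragraph yields the corollary.

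There is essentially no obstacle here: all of the arithmetic content is already packaged in Lemma 2.5 (which itself rests on Lucas' Theorem 2.4), and the only residual point --- that the numbers of the form $2^t-1$ form a set of density zero --- is an elementary counting estimate. The one thing worth stating explicitly for the reader is the convention under which ``almost all'' is being used, since the phrase is otherwise informal; with that convention fixed, the argument above is complete.
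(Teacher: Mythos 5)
Your proof is correct and follows the same route as the paper, which simply derives the corollary from Lemma 2.5 together with the definition of an even-odd graph. The additional density-zero estimate for the exceptional set $\{2^t-1\}$ is a welcome clarification of the informal phrase ``almost all,'' but it is not needed beyond what the paper already does.
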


Now, it follows from Theorem 2.1.  the following result.

\begin{theorem}
Let $ k$   be a positive integer. If $ O_{k+1} $ is
an even-odd graph, then it is a vertex-transitive non Cayley graph, namely,
$ O_{k+1}$ is a  VTNCG.
\end{theorem}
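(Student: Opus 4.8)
The plan is to deduce the statement directly from Theorem 2.1, applied to the Kneser graph $K(2k+1,k)$. First I would recall that $O_{k+1}$ is, by definition, the Kneser graph $K(n,k)$ with $n=2k+1$, so that $|V(O_{k+1})|=\binom{2k+1}{k}$; the hypothesis that $O_{k+1}$ is an even-odd graph is then precisely the assertion that $\binom{n}{k}=\binom{2k+1}{k}$ is an even integer, which is one of the standing hypotheses of Theorem 2.1.

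Next I would verify that the remaining numerical hypotheses of Theorem 2.1 hold. Since $n=2k+1$ is odd, we are aiming at case $(I)$ of that theorem. The inequality $k<\tfrac{n}{2}=k+\tfrac12$ is automatic for every $k$, so only $n>4$ and $k\ge 2$ need checking. The unique positive integer $k$ with $k=1$ gives $O_2=K(3,1)$, whose order is $\binom{3}{1}=3$, an odd number; hence $O_2$ is \emph{not} an even-odd graph, and the hypothesis of the present theorem forces $k\ge 2$. Consequently $n=2k+1\ge 5>4$, and all hypotheses of Theorem 2.1$(I)$ are met for the graph $K(2k+1,k)$.

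Applying Theorem 2.1$(I)$ then gives at once that $K(2k+1,k)=O_{k+1}$ is a vertex-transitive non Cayley graph, which is the desired conclusion. If one wishes, one may combine this with Corollary 2.7 to record the quantitative consequence that $O_{k+1}$ is a VTNCG for \emph{almost all} $k$, namely for every $k>1$ that is not of the form $2^{t}-1$.

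Since the whole argument is a reduction to a theorem already established, there is essentially no real obstacle; the only point that needs a moment's attention is the boundary value $k=1$, which is ruled out exactly because $\binom{3}{1}$ is odd, so that the hypothesis ``$O_{k+1}$ is an even-odd graph'' already supplies the bound $k\ge 2$ required by Theorem 2.1.
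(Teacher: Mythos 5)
Your proposal is correct and follows exactly the paper's route: the paper derives this theorem as an immediate consequence of Theorem 2.1, case (I), applied to $K(2k+1,k)$. Your extra care in ruling out $k=1$ (since $\binom{3}{1}=3$ is odd, so $O_2$ is not even-odd) is a worthwhile detail that the paper leaves implicit.
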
 \

Let $ \Gamma $ be a graph, then the line graph $ L( \Gamma )$
of the graph $ \Gamma$ is constructed by taking the edges of $ \Gamma$
as vertices of  $ L( \Gamma )$, and joining two vertices in $ L( \Gamma )$
whenever the corresponding edges in $ \Gamma $ have a common vertex. There is
an important relation between $ Aut( \Gamma)$ and  $ Aut( L(\Gamma))$. In fact, we have
the following result [3, chapter 15]. \
\begin{theorem}
The mapping $ \theta: Aut( \Gamma) \rightarrow Aut( L(\Gamma))$ defined by;
$$ \theta(g) \{ u,v\}= \{  g(u), g(v)\}, \   g \in Aut( \Gamma), \ \{u,v \} \in E( \Gamma ) $$
is a group homomorphism and in fact we have;  \

$(i)$  \  \ \   $ \theta $ is a monomorphism provided  $ \Gamma \neq K_2 $; \

$(ii)$ \ \   $ \theta $ is an
epimorphism provided $\Gamma $ is not $K_4 $, $K_4 $ with one edge deleted, or $K_4 $ with two adjacent
edges deleted.
\end{theorem}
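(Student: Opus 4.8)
The plan is to treat the three claims in turn, with essentially all of the difficulty concentrated in the surjectivity statement $(ii)$, which is Whitney's classical reconstruction theorem. The homomorphism property is a matter of unwinding definitions: if $g\in Aut(\Gamma)$ then $g$ permutes $V(\Gamma)$ preserving adjacency, so it carries edges to edges, and, being injective on $V(\Gamma)$, it has the property that two edges of $\Gamma$ share a vertex if and only if their images do; hence $\theta(g)$ is a well-defined bijection of $V(L(\Gamma))=E(\Gamma)$ preserving adjacency in $L(\Gamma)$, i.e. $\theta(g)\in Aut(L(\Gamma))$, and $\theta(gh)=\theta(g)\theta(h)$ is immediate. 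For $(i)$, suppose $g\in\ker\theta$, so that $g$ fixes every edge of $\Gamma$ setwise. Since $\Gamma$ is connected and $\Gamma\neq K_2$, no edge has both endpoints of degree $1$, so every edge $\{u,v\}$ has an endpoint, say $v$, lying on a second edge $e'$; then $g$ fixes $\{u,v\}$ and $e'$ setwise, hence fixes their unique common vertex $v$, and then, still fixing $\{u,v\}$ setwise, also fixes $u$. As every vertex of $\Gamma$ is an endpoint of some edge, $g$ is the identity, so $\theta$ is a monomorphism.

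For $(ii)$ I would work with the cliques $E_v=\{e\in E(\Gamma):v\in e\}$ of $L(\Gamma)$, one for each vertex $v$ of $\Gamma$ with $\deg(v)\ge 2$, and record three structural facts. First, because distinct edges of a simple graph meet in at most one vertex, every edge of $L(\Gamma)$ lies in exactly one $E_v$ and every vertex of $L(\Gamma)$ lies in at most two of them; thus $\{E_v\}$ partitions the edge set of $L(\Gamma)$ into cliques covering each vertex at most twice (a Krausz partition). Second, a short Helly-type argument shows that every clique of $L(\Gamma)$ of size at least $4$ --- equivalently, every family of at least four edges of $\Gamma$ that pairwise share a vertex --- lies in some $E_v$: such a family either passes through a common vertex or already includes the three sides $ab$, $bc$, $ca$ of a triangle of $\Gamma$, and in the latter case no further edge of $\Gamma$ meets all three, contradicting the size bound. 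Third, it follows that the only obstruction to recovering $\{E_v\}$ from the abstract graph $L(\Gamma)$ comes from the triangles of $L(\Gamma)$, each of which is either a ``star'' $\{va,vb,vc\}\subseteq E_v$ or the set of the three sides of a genuine triangle of $\Gamma$.

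The decisive ingredient is then a criterion, formulated entirely inside $L(\Gamma)$, separating star triangles from genuine ones; a convenient form is that a triangle $T$ which happens to be a maximal clique arises from a genuine triangle of $\Gamma$ if and only if every vertex of $L(\Gamma)$ outside $T$ is adjacent to $0$ or to at least $2$ of the vertices of $T$. Granting this, every $\phi\in Aut(L(\Gamma))$ must carry the family $\{E_v\}$ onto itself; reading off the induced action gives a bijection $g$ of the set of vertices of $\Gamma$ of degree $\ge 2$ with $\phi(E_v)=E_{g(v)}$, and $g$ extends to the pendant vertices of $\Gamma$ through their unique incident edges. Since for distinct $u,v$ one has $uv\in E(\Gamma)$ if and only if $E_u\cap E_v\neq\varnothing$ (there are no multiple edges), $g$ preserves adjacency, so $g\in Aut(\Gamma)$ and $\theta(g)=\phi$; this gives surjectivity.

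I expect the triangle-recognition criterion to be the main obstacle: it holds only once $\Gamma$ is large enough, and where it fails the line graph $L(\Gamma)$ admits a Krausz partition inequivalent to $\{E_v\}$. A direct finite check then shows that the only connected graphs for which such an inequivalence actually produces an automorphism of $L(\Gamma)$ lying outside the image of $\theta$ are $K_4$, $K_4$ with one edge deleted, and $K_4$ with two adjacent edges deleted --- the prototype being $L(K_4)=K_{2,2,2}$, the octahedron, whose automorphism group has order $48$ whereas $|Aut(K_4)|=24$. For every connected $\Gamma$ outside this short list the argument of the two previous paragraphs applies verbatim, giving $(ii)$; so the genuine work lies in the Helly-type clique combinatorics of line graphs together with the finite exceptional-case verification, while the homomorphism property and $(i)$ are routine.
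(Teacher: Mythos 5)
The paper does not actually prove this statement: it is quoted as a known result (the Whitney--Sabidussi theorem on line-graph automorphisms) with a reference to Biggs, so there is no internal proof to compare against. Your sketch is the standard route to that theorem and is sound in outline. The homomorphism property and part $(i)$ are complete as written (the kernel argument via the unique common vertex of two setwise-fixed edges is exactly right, and $K_2$ is correctly excluded because $L(K_2)=K_1$). For part $(ii)$ you correctly reduce everything to the Krausz partition $\{E_v\}$: the Helly-type fact that pairwise intersecting edges form a star or a triangle does show that every clique of $L(\Gamma)$ of size at least $4$ is contained in a unique $E_v$, genuine triangles of $\Gamma$ are always maximal cliques satisfying your ``$0$ or at least $2$'' criterion, and a maximal star triangle $\{va,vb,vc\}$ violates it unless $V(\Gamma)=\{v,a,b,c\}$, which is precisely how $K_{1,3}$, $K_4$ and the two intermediate graphs emerge as the graphs where the criterion breaks down; a separate check (which you correctly flag) then shows $K_{1,3}$ and $K_3$ still give surjectivity while the three listed graphs do not. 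The two places where your write-up defers real work are (a) the passage from ``$\phi$ preserves maximal cliques, their sizes, and the genuine/star classification of maximal triangles'' to ``$\phi$ preserves the whole partition $\{E_v\}$,'' which also needs the size-$2$ cells handled (each edge of $L(\Gamma)$ lies in exactly one cell, so this does go through), and (b) the finite verification of the exceptional cases (e.g.\ $L(K_4)=K_{2,2,2}$ with automorphism group of order $48$ against $|{\rm Aut}(K_4)|=24$). Neither is a gap in the sense of a wrong idea; filling them in yields a complete proof of the result the paper only cites.
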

In the sequel, for an application of our method in proving the previous results,
 we show that the line graphs of some classes
 of odd graphs are vertex-transitive non Cayley graphs.

The graph $\Gamma$ is called a
 distance-transitive graph
if for all
vertices $u, v, x, y,$ of $\Gamma$ such that $ d(u,v) =d(x,y)$, there exists
 some $ g \in Aut(\Gamma)$ satisfying $g(u) = x $ and $ g(v) = y$. It is well known
 that every odd graph is a distance-transitive graph [2].
The action of $Aut(\Gamma)$ on
$V(\Gamma)$ induces an action on $E(\Gamma)$, by the rule
$\beta\{x,y\}=\{\beta(x),\beta(y)\}$,  $\beta\in Aut(\Gamma)$, and
$\Gamma$ is called edge-transitive if this action is
transitive.
It is clear that a distance-transitive graph is an edge-transitive graph, and hence an
odd graph is an edge-transitive graph. If a graph $\Gamma$ is
 edge-transitive, then its line graph is vertex transitive. Thus the line graph
 of an odd graph is a vertex-transitive graph.

\begin{theorem}
Let $ k > 4 $ be an even integer. If the integer ${2k+1}\choose{k}$ is a multiple of $ 4 $, then
   $ L(O_{k+1}) $, the line graph of the odd graph $O_{k+1} $  is a vertex-transitive non Cayley graph. \

\

\end{theorem}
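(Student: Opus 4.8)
The plan is to reduce the statement to producing an order-two automorphism of $O_{k+1}$ that fixes an edge, combining Theorem 2.9 with a counting argument in the spirit of Theorem 2.1(I). Since every odd graph is distance-transitive, hence edge-transitive, $L(O_{k+1})$ is vertex-transitive, so it suffices to show $L(O_{k+1})$ is not a Cayley graph. Because $k>4$, the graph $O_{k+1}=K(2k+1,k)$ is neither $K_2$ nor $K_4$ nor one of the small exceptional graphs appearing in Theorem 2.9 (it has $\binom{2k+1}{k}>4$ vertices), so that theorem yields an isomorphism $Aut(O_{k+1})\cong Aut(L(O_{k+1}))$; together with $Aut(O_{k+1})=Aut(K(2k+1,k))\cong Sym([2k+1])$ this gives $Aut(L(O_{k+1}))\cong Sym([2k+1])$, under which the action of $Aut(L(O_{k+1}))$ on $V(L(O_{k+1}))=E(O_{k+1})$ corresponds to the action of $Sym([2k+1])$ on $E(O_{k+1})$ by $g\{u,v\}=\{g(u),g(v)\}$.

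Now suppose, toward a contradiction, that $L(O_{k+1})$ is a Cayley graph. Then $Sym([2k+1])$ contains a subgroup $R$ acting regularly on $E(O_{k+1})$, so $|R|=|E(O_{k+1})|$. Since $O_{k+1}$ is $(k+1)$-regular on $\binom{2k+1}{k}$ vertices, the handshake lemma gives $|E(O_{k+1})|=\tfrac12(k+1)\binom{2k+1}{k}=(k+1)\cdot\tfrac12\binom{2k+1}{k}$. Here $k$ is even, so $k+1$ is odd, while by hypothesis $4\mid\binom{2k+1}{k}$, so $\tfrac12\binom{2k+1}{k}$ is even; hence $|R|$ is even, and by Cauchy's theorem $R$ contains an element $\theta$ of order $2$.

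Writing $\theta$ as a product of disjoint cycles, $\theta=\tau_1\cdots\tau_a(i_1)\cdots(i_b)$ with each $\tau_r$ a transposition, each $(i_s)$ a fixed point, and $2a+b=2k+1$, we see $b$ is odd, so $b\ge 1$ and $a\le k$. Since $k$ is even, I would then construct two disjoint $\theta$-invariant $k$-subsets $v,w$ of $[2k+1]$ as follows: choose an integer $a_1$ with $\max(0,a-\tfrac k2)\le a_1\le\min(a,\tfrac k2)$ (this interval is nonempty precisely because $a\le k$), let $v$ be the union of the supports of $a_1$ of the transpositions together with $k-2a_1$ of the fixed points, and let $w$ be the union of the supports of the remaining $a-a_1$ transpositions together with $k-2(a-a_1)$ further fixed points; one checks $|v|=|w|=k$, that the total number of fixed points used is $2k-2a=b-1\le b$ so the choice is possible, and that $v\cap w=\varnothing$. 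As in Remark 2.2, both sets are unions of full transposition-supports and fixed points, so $\theta(v)=v$ and $\theta(w)=w$; hence $e=\{v,w\}$ is an edge of $O_{k+1}$ with $\theta(e)=e$. This contradicts the regularity — in particular the freeness — of the action of $R$ on $E(O_{k+1})$, since $\theta\neq \mathrm{id}$ fixes the vertex $e$ of $L(O_{k+1})$. Therefore $L(O_{k+1})$ is not a Cayley graph.

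The calculations here are routine — the handshake count for $|E(O_{k+1})|$ and the bookkeeping for the existence of $v,w$ — and the only step requiring genuine care is the arithmetic linking the hypothesis that $\binom{2k+1}{k}$ is a \emph{multiple of $4$} to the conclusion that $|R|$ is even: this is exactly where both the oddness of $k+1$ and the factor of $4$ (as opposed to merely $2$) enter, so it is the point most easily gotten wrong.
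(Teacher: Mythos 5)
Your proposal is correct and follows essentially the same route as the paper: use Theorem 2.9 to identify $Aut(L(O_{k+1}))$ with $Sym([2k+1])$, derive an order-$2$ element $\theta$ of a putative regular subgroup via Cauchy's theorem (this is where the hypothesis $4\mid\binom{2k+1}{k}$ and the oddness of $k+1$ are needed), and exhibit two disjoint $\theta$-invariant $k$-subsets $v,w$ so that $\theta$ fixes the vertex $\{v,w\}$ of the line graph. Your explicit parametrized construction of $v$ and $w$ is just a more detailed version of what the paper delegates to Remark 2.2, and it checks out.
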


\begin{proof}
We let $ \Gamma = L(O_{k+1}) $. Then, each vertex of $\Gamma $ is of the form $\{v,w  \} $, where
$v,w$ are $k$-subsets of $[n]= \{1,2,...,n \}$ and $v \cap w = \varnothing$. Since each vertex of
$O_{k+1}$ is of degree $k+1$, then the order of $\Gamma$
is  $\frac{k+1}{2}$${2k+1}\choose{k}$  and hence $ \Gamma$ is of
even order, because by assumption, ${2k+1}\choose{k}$ is a multiple of 4. Note that
by Theorem 2.9. each automorphism of the graph $\Gamma$ is of the form $f_{\theta}$, $\theta \in Sym([n])$, where
$f_{\theta}(\{v,w \})=\{\theta(v),\theta(w)\}$ for every vertex $\{v,w  \} $ of $\Gamma$. \

Now, On the contrary, assume that $\Gamma$ is a Cayley graph. Then the automorphism group of
the graph $\Gamma$
has a subgroup $R$ such that $R$ acts regularly on the vertex set of $\Gamma$ and hence $R$ is of order
$\frac{k+1}{2}$${2k+1}\choose{k}$. Therefore,   $R$ has an
element $f_{\theta}$ of order 2, where $\theta \in Sym([n])$, and hence $\theta$ is of order 2.
Thus, by   Remark 2.2. there are $k$-subsets $v,w$ of $[n]$ such that
$ v \cap w =\varnothing $ and $\theta (v)=v, \ \theta (w)=w$. Consequently, for the vertex $ \{v,w \}$ of the
graph $\Gamma $, we have,   \

\

\centerline{$ f_{\theta}(\{v,w \})= \{\theta(v), \theta(w)  \}= \{ v,w \} $    }
\

which is a contradiction.
\end{proof}
\
\begin{remark}
Let the group  $R$ be as in the above theorem. We do not know whether $R$ acts transitively   on the
family of $k$-subsets of $[n]$, hence it seems that we cannot use the methods of Godsil [8] for
proving the above theorem.

\end{remark}

\begin{lemma}

Let $ k > 4 $ be an even integer  and $k$ is not of the form $ k = 2^t  $  for some $ t \geq 2$,  then
the number  $ { 2k+1}  \choose { k } $ is a multiple of $ 4 $.
\end{lemma}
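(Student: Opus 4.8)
The plan is to pin down the exact power of $2$ dividing $\binom{2k+1}{k}$ and show it is at least $2$; once $v_2\!\left(\binom{2k+1}{k}\right)\ge 2$, the lemma follows. Throughout, for a positive integer $m$ let $v_2(m)$ denote the exponent of $2$ in the prime factorization of $m$, and let $s_2(m)$ denote the number of $1$'s in the binary expansion of $m$.

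First I would record the elementary identity
$$\binom{2k+1}{k}=\frac{1}{2}\binom{2k+2}{k+1},$$
which follows from Pascal's rule $\binom{2k+2}{k+1}=\binom{2k+1}{k}+\binom{2k+1}{k+1}$ together with $\binom{2k+1}{k+1}=\binom{2k+1}{k}$. Hence it suffices to prove that $8$ divides the central binomial coefficient $\binom{2(k+1)}{k+1}$. For this I would invoke the standard elementary fact that for every positive integer $N$ one has $v_2\!\left(\binom{2N}{N}\right)=s_2(N)$; this can be cited directly, or derived in one line from Legendre's formula $v_2(m!)=m-s_2(m)$ using $s_2(2N)=s_2(N)$, or from Kummer's theorem (the valuation equals the number of carries when adding $N$ to itself in base $2$, which is exactly $s_2(N)$). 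Applying this with $N=k+1$ and combining with the identity above yields the clean formula
$$v_2\!\left(\binom{2k+1}{k}\right)=s_2(k+1)-1 .$$

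It then remains to show that $s_2(k+1)\ge 3$ under the hypotheses on $k$. Since $k$ is even, $k+1$ is odd, so its least significant binary digit is $1$ and $s_2(k+1)\ge 1$. If $s_2(k+1)=1$ then $k+1=2^a$, forcing $k=2^a-1$ to be odd, a contradiction. If $s_2(k+1)=2$ then, using that $k+1$ is odd, $k+1=2^a+1$ for some $a\ge 1$, i.e. $k=2^a$; but $k>4$ and $k$ is not of the form $2^t$ with $t\ge 2$, so this too is impossible (the assumption $k>4$ also removes the stray case $k=2=2^1$). Therefore $s_2(k+1)\ge 3$, whence $v_2\!\left(\binom{2k+1}{k}\right)\ge 2$, that is, $4$ divides $\binom{2k+1}{k}$.

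The only ingredient that is not completely routine is the $2$-adic valuation of the central binomial coefficient; depending on how self-contained one wishes to be, this is either quoted as a known elementary fact (in the spirit of the paper's stated methodology) or given a two-line derivation from Legendre's formula. Everything else is the short Pascal identity and the digit-counting case analysis above. I would also note in passing that this lemma feeds directly into Theorem 2.10 to conclude that $L(O_{k+1})$ is a VTNCG for the stated $k$, matching the final claim of the abstract.
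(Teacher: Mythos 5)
Your proof is correct, but it takes a genuinely different route from the paper. The paper writes $\binom{2k+1}{k} = 2(2k+1)\,t$ with $t=\frac{(k+2)(k+3)\cdots(2k-1)}{2\cdot 3\cdots(k-1)}$, observes that $(k+1)t=\binom{2(k-1)+1}{k-1}$, and then reuses its earlier Lucas-theorem lemma (Lemma 2.5): since $k\neq 2^i$, the integer $k-1$ is not of the form $2^i-1$, so $\binom{2(k-1)+1}{k-1}$ is even, and since $k+1$ is odd this forces $2\mid t$, whence $4\mid 2(2k+1)t$. You instead pass to the central binomial coefficient via $\binom{2k+1}{k}=\frac{1}{2}\binom{2k+2}{k+1}$ and invoke $v_2\bigl(\binom{2N}{N}\bigr)=s_2(N)$ (Legendre/Kummer) to get the exact valuation $v_2\bigl(\binom{2k+1}{k}\bigr)=s_2(k+1)-1$, reducing the lemma to the digit count $s_2(k+1)\ge 3$, which your case analysis on $s_2(k+1)\in\{1,2\}$ correctly rules out using that $k$ is even, $k>4$, and $k$ is not a power of $2$. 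Your approach buys more: it determines the exact power of $2$ dividing $\binom{2k+1}{k}$ (so one can read off divisibility by $8$, $16$, etc.), and it is self-contained modulo one standard fact; the paper's approach buys economy within its own framework, since it recycles Lemma 2.5 and stays inside the Lucas-theorem toolkit already set up, at the cost of a more ad hoc factorization and only a divisibility (not valuation) conclusion. Both arguments are elementary and both are complete.
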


\begin{proof}

We know that; \

\

\centerline{$ { 2k+1 } \choose { k }$ = $  \frac  {(2k+1 )! }{ (k!)(k+1)! }$ = $ \frac {(k+2)...(2k)(2k+1)}{2.3....k} = 2(2k+1) \frac {(k+2)(k+3)...(2k-1)}{2.3...(k-1)}$}
\

If we  let $ t= \frac {(k+2)(k+3)...(2k-1)}{2.3...(k-1)}$, then we have\\

\centerline{$( k+1)t= \frac {(k+1)(k+2)...(2k-1)}{2.3...(k-1)  }$ = ${ 2(k-1)+1 } \choose { k-1 } $ = $\frac{(2k-1)!}{(k!)(k-1)!}$}

\

If we show that $t$ is a multiple of 2, then we conclude that $ 2(2k+1)t$= $ { 2k+1 } \choose { k }$  is a multiple of 4.
Note that by our assumption $k \neq 2^i$ for each positive integer $i$, hence $( k-1)$ is not of the form $2^i -1$ for some positive integer $i$, thus by Lemma 2.5. the integer  ${ 2(k-1)+1 } \choose { k-1 } $ is an even integer, and therefore 2 divides  the integer $ (k+1)t $. On the other hand, $k$ is an even integer, so $k+1$ is an odd integer,  and  hence 2 and $(k+1)$ are relatively prime, thus 2 divides the integer $t$.
\end{proof}
 Now, by Theorem 2.10. and Lemma 2.12. the following theorem follows.

\begin{theorem}
Let $ k > 4 $ be an even integer  such that $k$ is not of the form $ k = 2^t  $  for some integer  $ t > 2$.  Then
the line graph of the  odd graph $O_{k+1}$ is a vertex-transitive non Cayley graph, namely,  for such an
 integer $k$,   the graph
 $L(O_{k+1})$ is a VTNCG.

\end{theorem}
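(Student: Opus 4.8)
The plan is to deduce Theorem 2.13 as a routine consequence of the two results it cites, namely Theorem 2.10 and Lemma 2.12, so the proof is essentially a two-line verification that the hypotheses of the former are met. First I would fix an even integer $k>4$ with $k\neq 2^t$ for any integer $t>2$. The only delicate point is bookkeeping on small exponents: the hypothesis in Theorem 2.13 excludes $k=2^t$ only for $t>2$, whereas Lemma 2.12 excludes $k=2^t$ for $t\geq 2$. So I must observe that the two exceptional values $k=2^1=2$ and $k=2^2=4$ that Lemma 2.12 additionally forbids are already outside the range $k>4$ (and the parity/size constraints) under consideration, so that the hypothesis ``$k>4$ even and $k\neq 2^t$ for $t>2$'' is in fact equivalent, within the relevant range, to ``$k>4$ even and $k\neq 2^t$ for $t\geq 2$.'' Hence Lemma 2.12 applies verbatim.

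Having checked this, I would invoke Lemma 2.12 to conclude that $\binom{2k+1}{k}$ is a multiple of $4$. Now all hypotheses of Theorem 2.10 are satisfied: $k>4$ is an even integer and $\binom{2k+1}{k}$ is a multiple of $4$. Therefore Theorem 2.10 yields directly that $L(O_{k+1})$, the line graph of the odd graph $O_{k+1}$, is a vertex-transitive non-Cayley graph. That is the assertion of Theorem 2.13, so the proof is complete.

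I do not expect any real obstacle here, since the substantive work — constructing, from a hypothetical regular subgroup $R$ of $\mathrm{Aut}(L(O_{k+1}))\cong Sym([n])$, an order-$2$ element $\theta$ fixing a vertex $\{v,w\}$ of the line graph via the disjoint pair of $\theta$-fixed $k$-sets supplied by Remark 2.2 — is already carried out inside the proof of Theorem 2.10, and the divisibility fact is Lemma 2.12. The one thing to be careful about in writing this up is simply not to overstate: Theorem 2.13 is genuinely weaker (more restrictive on $k$) than one might hope, and the proof should just confirm the implication chain Lemma 2.5 $\Rightarrow$ Lemma 2.12 $\Rightarrow$ (with Theorem 2.10) Theorem 2.13 without reproving anything. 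I would therefore keep the argument to a short paragraph that names the two inputs, reconciles the $t>2$ versus $t\geq 2$ discrepancy, and concludes.
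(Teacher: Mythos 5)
Your proposal is correct and follows exactly the paper's route: the paper proves Theorem 2.13 simply by combining Lemma 2.12 (which gives that $\binom{2k+1}{k}$ is a multiple of $4$) with Theorem 2.10. Your additional remark reconciling the "$t>2$" versus "$t\geq 2$" hypotheses is a sound bookkeeping check (since $k>4$ already excludes $k=4=2^2$) that the paper leaves implicit.
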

\

{ \bf Acknowledgements  } \

The author is thankful to the anonymous reviewers for valuable comments and suggestions.
\

\

\end{document}